\numberwithin{equation}{section}
\newtheorem{lem}{Lemma}[section]
\newtheorem{teo}[lem]{Theorem}
\newtheorem{oss}[lem]{Remark}
\newcommand{\II}{I\!I}
\renewcommand{\phi}{\varphi}
\newcommand{\R}{{\mathbb{R}}}
\newcommand{\bbR}{{\mathbb{R}}}
\newcommand{\Ric}{{\mathrm{Ric}}}
\newcommand{\average}{{\mathchoice {\kern1ex\vcenter{\hrule height.4pt

width 6pt

depth0pt} \kern-9.7pt} {\kern1ex\vcenter{\hrule height.4pt width 4.3pt

depth0pt}

\kern-7pt} {} {} }}
\title[Classification of stable solutions]{Classification of stable solutions\\
for boundary value
problems\\ with nonlinear boundary conditions\\ on Riemannian manifolds\\
with nonnegative Ricci curvature}
\thanks{Part of this work was done while A. P. was visiting the
Dipartimento di Matematica ``Federigo Enriques"
of the University of Milan. 
The authors are members of {\em Gruppo Nazionale per l'Analisi
Ma\-te\-ma\-ti\-ca, la Probabilit\`a e le loro Applicazioni} (GNAMPA)
of the {\em Istituto Nazionale di Alta Matematica} (INdAM). Supported by
the Australian Research Council Discovery Project Grant ``N.E.W. Nonlocal Equations
at Work''.}
\author{Serena Dipierro, Andrea Pinamonti, Enrico Valdinoci}
\address{Dipartimento di Matematica ``Federigo Enriques'',
Universit\`a degli studi di Milano,
Via Saldini 50, 20133 Milano (Italy).}
\email{serena.dipierro@unimi.it}
\address{Dipartimento di Matematica, Universit\`a di Trento,
Via Sommarive 14, 38050 Povo, Trento (Italy).}
\email{andrea.pinamonti@gmail.com}
\address{Dipartimento di Matematica ``Federigo Enriques'',
Universit\`a degli studi di Milano,
Via Saldini 50, 20133 Milano (Italy), and
School of Mathematics and Statistics,
University of Melbourne, Grattan Street, 
Parkville, VIC-3010 Melbourne (Australia), and
Istituto di Matematica Applicata e Tecnologie Informatiche,
Via Ferrata 1, 27100 Pavia (Italy).
}
\email{enrico.valdinoci@unimi.it}
\begin{document}

\begin{abstract}
We present
a geometric formula of Poincar\'e type,
which is
inspired by a classical work of Sternberg and
Zumbrun, and
we provide a classification result
of stable solutions of linear elliptic problems
with nonlinear Robin conditions on Riemannian manifolds
with nonnegative Ricci curvature.

The result obtained here is a refinement of
a result recently established by Bandle, Mastrolia, 
Monticelli and Punzo.
\end{abstract}

\maketitle


\section{Introduction}  

The study of partial differential equations on manifolds
has a long tradition in analysis and geometry, see e.g.~\cite{MORR, W71, W73, Jost, A98}.
The interest for such topic may come from different perspectives:
on the one hand, at a local level, classical equations with variable coefficients
can be efficiently comprised into the manifold setting,
allowing more general and elegant treatments;
in addition, at a global level,
the geometry of the manifold can produce new interesting phenomena
and interplay with the structure of the solutions thus creating a
novel scenario for the problems into consideration.

Of course, given the complexity of the topic, the different solutions
of a given partial differential equation on a manifold can give rise to
a rather wild ``zoology'' and it is important to try to group the solutions
into suitable ``classes'' and possibly to classify all the solution belonging to a class.

In this spirit, very natural classes of solutions in a variational setting
arise from energy considerations. The simplest class in this framework
is probably that of ``minimal solutions'', namely the class of solutions
which minimize (or, more generally, local minimize) the energy functional.

On the other hand, it is often useful to look at a more general class
than minimal solutions, that is the class of
solutions at which the second derivative of the energy functional
is nonnegative.
These solutions are called ``stable'' (see e.g.~\cite{DUPAI}).
Of course, the class of stable solutions contains that of minimal solutions,
but the notion of stability is often in concrete situations
more treatable than that of minimality: for instance,
it is typically very difficult to establish whether 
or not a given solution is minimal, since one in principle should
compare its energy with that of all the possible competitors,
while a stability check could be more manageable, relying
only on a single, and sometimes
sufficiently explicit, second derivative bound.\medskip

The goal of this paper is to study the case of a linear elliptic equation
on a domain of a Riemannian manifold
with nonnegative Ricci curvature, endowed with nonlinear
boundary data. We will consider stable solutions
in this setting and provide sufficient conditions
to ensure that they are necessarily constant.\medskip

The framework in which we work is the following.
Let~$M$ be a connected~$m$-dimensional Riemannian manifold
endowed with a smooth Riemannian metric~$g=(g_{ij})$.
We denote by~$\Delta$ the Laplace-Beltrami operator
induced by~$g$.
Let~$\Omega\subset M$ be a compact orientable domain and~~$\nu$ be
the outer normal vector of~$\partial\Omega$ lying in the tangent
space~$T_p M$ for any~$p\in\partial\Omega$.
We assume that~$\partial\Omega$ is orientable for the outer normal
to be well defined and continuous.

In this paper we study the solutions to the following boundary value problem:
\begin{align}\label{eqncomp}
\bigg \{
\begin{array}{rl}
\Delta u+f(u)=0 & \mbox{in}\ \Omega, \\
\partial_{\nu}u+h(u)=0 & \mbox{on}\ \partial\Omega, \\
\end{array}
\end{align} 
where~$f,h \in C^{1}(\bbR)$ and~$\partial_{\nu}u:= g(\nabla u, \nu)$. Similar problems
have been investigated in~\cite{DPV, punzo, punzo1, Jimbo}.

As usual, we consider the volume term induced by~$g$,
that is, in local coordinates,
\begin{equation*}
dV = \sqrt{|g|}\,dx^1\wedge \dots \wedge dx^m,
\end{equation*}
where~$\{ dx^1,\dots,dx^m\}$ is the basis of~$1$-forms
dual to the vector basis~$\{\partial_1,\dots,\partial_m\}$,
and~$|g|=\det(g_{ij})\ge0$. We also denote by~$d\sigma$ the volume measure on~$\partial\Omega$ induced by the embedding~$\partial\Omega\hookrightarrow M$.\medskip

As customary, we say that~$u$ is a weak solution to~\eqref{eqncomp}
if~$u\in C^1(\overline{\Omega})$ and
\begin{align}\label{weak}
\int_{\Omega}\left\langle \nabla u,
\nabla\varphi\right\rangle \, dV
+\int_{\partial\Omega} h(u)\varphi \,
d \sigma=\int_{\Omega} f(u)\varphi \, dV,
\quad {\mbox{ for any }} \varphi\in C^1(\Omega).
\end{align}
Moreover,
we say that a weak solution~$u$ is stable if
\begin{align}\label{hstab}
\int_{\Omega}|\nabla\varphi|^2 \, dV+\int_{\partial\Omega}
h'(u)\varphi^2 \, d\sigma-
\int_{\Omega} f'(u)\varphi^2\, dV\ge 0, \qquad {\mbox{ for any }}
\varphi\in C^{1}(\Omega).
\end{align} 
\medskip

In order to state our result we recall below some classical notions in Riemannian geometry.
Given a vector field~$X$, we denote
$$ |X|=\sqrt{\langle X,X\rangle}.$$
Also (see, for instance Definition~3.3.5 in~\cite{Jost}),
it is customary to define
the Hessian of a smooth function~$\phi$ as
the symmetric~$2$-tensor given in a local patch by
$$ (H_\phi)_{ij}=\partial^2_{ij}\phi-\Gamma^k_{ij}\partial_k\phi,$$
where~$\Gamma^k_{ij}$ are the  
Christoffel symbols, namely
$$ 
\Gamma_{ij}^k=\frac12 g^{hk} \left( \partial_i g_{hj} +\partial_j g_{ih} -\partial_h g_{ij} \right) .$$
Given a tensor~$A$,
we define its norm by~$|A|=\sqrt{A A^*}$, where~$A^*$
is the adjoint. 

The above quantities are related to the Ricci tensor~$\Ric$
via the Bochner-Weitzenb\"ock formula (see,
for instance,~\cite{Berger} and references therein):
\begin{equation}\label{BOC}
\frac 12\Delta |\nabla \phi|^2=
|H_\phi|^2+ \langle \nabla \Delta \phi,\nabla\phi\rangle
+\Ric (\nabla \phi,\nabla\phi).\end{equation}
Finally, we let~$\II$ and~$H$ denote the second fundamental
tensor and the mean curvature of the
embedding~$\partial\Omega \hookrightarrow \Omega$ in the
direction of the outward unit normal vector field~$\nu$, respectively.
\medskip

We are now in position to state our main result:
\begin{teo}\label{main1}
Let~$u\in C^3(\overline{\Omega})$ be a stable solution
to~\eqref{eqncomp}. Assume that the Ricci curvature is nonnegative
in~$\Omega$, and that, for any~$p\in\partial\Omega$,
\begin{equation}\label{cond0}\begin{split}&
{\mbox{the quadratic form~$\II-h'(u)\, \tilde g$ on the tangent
space~$T_p(\partial\Omega)$}}\\&{\mbox{is nonpositive definite.}}\end{split}\end{equation}
If
\begin{equation}\label{cond}
\int_{\partial\Omega}\Big(h(u)\, f(u)+ (m-1)\,\big(h(u)\big)^2\, H
+h'(u)\big(h(u)\big)^2\Big)\, d\sigma \le 0,
\end{equation}
then~$u$ is constant in~$\Omega$.
\end{teo}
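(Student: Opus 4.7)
The plan is to adapt the Sternberg--Zumbrun strategy to the Robin/Riemannian setting. The key device is to test the stability inequality \eqref{hstab} with $\varphi=|\nabla u|$ (after an approximation by $\sqrt{|\nabla u|^2+\eps^2}$, since $u$ is only $C^3$ and $|\nabla u|$ may vanish), and to combine it with the Bochner--Weitzenb\"ock identity \eqref{BOC} applied to $u$. Using $\Delta u=-f(u)$ to replace $\langle\nabla\Delta u,\nabla u\rangle=-f'(u)|\nabla u|^2$, the identity reads
$$\tfrac12\Delta|\nabla u|^2=|H_u|^2-f'(u)|\nabla u|^2+\Ric(\nabla u,\nabla u).$$
Substituting the $f'(u)|\nabla u|^2$ term into \eqref{hstab} and using the divergence theorem $\int_\Omega \Delta|\nabla u|^2\,dV=2\int_{\partial\Omega}H_u(\nu,\nabla u)\,d\sigma$, one rearranges to obtain
$$\int_\Omega\bigl(|H_u|^2-|\nabla|\nabla u||^2\bigr)\,dV+\int_\Omega \Ric(\nabla u,\nabla u)\,dV\le\int_{\partial\Omega}\bigl[H_u(\nu,\nabla u)+h'(u)|\nabla u|^2\bigr]\,d\sigma.$$
The left-hand side is nonnegative by Kato's inequality $|\nabla|\nabla u||^2\le|H_u|^2$ and the assumption $\Ric\ge 0$.

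Next, I would rewrite the boundary integral on the right using the Robin datum. On $\partial\Omega$, split $\nabla u = -h(u)\nu+\nabla_T u$, where $\nabla_T u$ denotes the tangential gradient, so that $|\nabla u|^2=h(u)^2+|\nabla_T u|^2$. The classical boundary decomposition of the Laplacian gives
$$H_u(\nu,\nu)=\Delta u-\Delta_T u-(m-1)H\,\partial_\nu u=-f(u)-\Delta_T u+(m-1)H\,h(u),$$
while differentiating the Robin condition $\partial_\nu u=-h(u)$ tangentially and using the Weingarten relation yields
$$H_u(\nu,\nabla_T u)=-h'(u)|\nabla_T u|^2-\II(\nabla_T u,\nabla_T u).$$
Assembling these, and integrating $\int_{\partial\Omega}h(u)\Delta_T u\,d\sigma=-\int_{\partial\Omega}h'(u)|\nabla_T u|^2\,d\sigma$ by parts on the closed manifold $\partial\Omega$, I expect the boundary expression on the right to collapse to
$$\int_{\partial\Omega}\bigl[h(u)f(u)+(m-1)\bigl(h(u)\bigr)^2 H+h'(u)\bigl(h(u)\bigr)^2\bigr]\,d\sigma-\int_{\partial\Omega}\bigl(\II-h'(u)\tilde g\bigr)(\nabla_T u,\nabla_T u)\,d\sigma.$$
By \eqref{cond0} the second integrand is $\le 0$, so subtracting it contributes $\le 0$; by \eqref{cond} the first integral is $\le 0$. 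Hence the whole right-hand side is $\le 0$.

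Combining with the nonnegative lower bound, every inequality must be an equality: $|\nabla|\nabla u||=|H_u|$ a.e.\ in $\Omega$, $\Ric(\nabla u,\nabla u)\equiv 0$, and $(\II-h'(u)\tilde g)(\nabla_T u,\nabla_T u)\equiv 0$ on $\partial\Omega$. A rigidity analysis of the equality case of Kato's inequality (in the spirit of Sternberg--Zumbrun and of the Bandle--Mastrolia--Monticelli--Punzo work cited in the abstract) then forces $\nabla u\equiv 0$, i.e.\ $u$ is constant in $\Omega$.

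The main obstacles I anticipate are twofold. First, the bookkeeping of sign conventions for $\II$, $H$ and the boundary Laplacian splitting must be carried out carefully so that the two integrals assemble \emph{exactly} into the quantities appearing in \eqref{cond} and \eqref{cond0}; a miscalibration of, say, the sign of $H$ would invert the hypothesis. Second, the formal test $\varphi=|\nabla u|$ requires the regularization $\sqrt{|\nabla u|^2+\eps^2}$ and a limit argument handling the critical set of $u$; and the closing rigidity step, while standard in Euclidean space, must be executed in the curved setting without implicitly invoking extra structure.
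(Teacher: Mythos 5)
Your overall strategy coincides with the paper's: test the stability inequality \eqref{hstab} with $|\nabla u|$, insert the Bochner--Weitzenb\"ock identity \eqref{BOC} with $\nabla\Delta u=-f'(u)\nabla u$, apply the divergence theorem, decompose the resulting boundary term via the splitting of $\Delta$ along $\partial\Omega$ and the tangential differentiation of the Robin condition, integrate $h(u)\tilde\Delta u$ by parts on the closed manifold $\partial\Omega$, and finish with Kato's inequality together with \eqref{cond0} and \eqref{cond}. (The paper packages this as a Poincar\'e inequality tested with $|\nabla u|\varphi$ plus a separate boundary identity, Theorem~\ref{Pz}, and then sets $\varphi\equiv1$; that is only a presentational difference.) One calibration issue you anticipated does bite: with the conventions under which \eqref{cond0} is stated, the second fundamental form must enter the boundary integral as $+\big(\II-h'(u)\,\tilde g\big)(\tilde\nabla u,\tilde\nabla u)$, which is then pointwise $\le 0$ by hypothesis; your collapsed expression carries it with a minus sign, and the sentence ``the second integrand is $\le 0$, so subtracting it contributes $\le 0$'' is backwards, since subtracting a nonpositive quantity adds a nonnegative one. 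The correct signs come out of the identity $\tfrac12\partial_\nu|\nabla u|^2=\II(\tilde\nabla u,\tilde\nabla u)-h'(u)|\tilde\nabla u|^2-h(u)H_u(\nu,\nu)$ of Theorem~\ref{Pz}, against which your Weingarten step and your version of the Laplacian splitting should be recalibrated.

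The genuine gap is the concluding step. From the chain of inequalities you obtain only that $\int_\Omega\big(|H_u|^2-|\nabla|\nabla u||^2\big)\,dV=0$ and $\int_\Omega\Ric(\nabla u,\nabla u)\,dV=0$, hence pointwise equality in Kato and $\Ric(\nabla u,\nabla u)\equiv0$. But equality in Kato's inequality does \emph{not} force $\nabla u\equiv0$: any nonzero linear function on a Euclidean domain has $|H_u|=|\nabla|\nabla u||=0$ while $\nabla u\neq0$, so there is no ``rigidity of the Kato equality case'' that yields constancy. The paper closes the argument through the other piece of information, namely $\Ric(\nabla u,\nabla u)\equiv0$ in $\Omega$, invoking Lemma~\ref{Fi1} (Lemma~9 of [fsv1]): if $\Ric\ge0$ and $\Ric$ does not vanish identically, then $\Ric(\nabla u,\nabla u)\equiv0$ along a solution forces $u$ to be constant. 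Your proof needs this lemma, or a substitute for it; as written, the final implication ``equality everywhere $\Rightarrow\nabla u\equiv0$'' is not justified and is false if it is meant to rest on Kato alone.
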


\begin{oss}{\rm
Theorem~\ref{main1} has been proved
in~\cite[Theorem 4.5]{punzo} in the particular case
in which~$h(t):=\alpha t$,
for some~$\alpha\in\R$. We point out that,
with this particular choice
of~$h$, Theorem~\ref{main1} here
weakens the assumptions on 
the sign of~$\alpha$ of~\cite[Theorem 4.5]{punzo}.}
\end{oss}

The proof of Theorem~\ref{main1} is
based on a geometric Poincar\'e-type inequality, which we state
in this setting as follows:

\begin{teo}\label{eqnlin}
Let~$u$ be stable weak solution to~\eqref{eqncomp}. Then,
\begin{equation}\begin{split}\label{GF}
&\int_\Omega \Big(
\Ric (\nabla u,\nabla u)+|H_u|^2-\big|\nabla|\nabla u|\big|^2
\Big)\varphi^2\,dV\\&\qquad
-\int_{\partial\Omega} \left( \frac{1}{2}\left\langle \nabla|\nabla u|^2, 
\nu\right\rangle
+h'(u) |\nabla u|^2 \right)\varphi^2 \, d\sigma\\
\le &
\int_\Omega |\nabla u|^2 |\nabla\varphi|^2\,dV,\end{split}\end{equation}
for any~$\varphi\in C^\infty(\Omega)$.
\end{teo}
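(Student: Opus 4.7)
The natural strategy, going back to Sternberg--Zumbrun, is to plug a test function of the form $|\nabla u|\,\varphi$ into the stability inequality \eqref{hstab} and then transform the resulting Dirichlet term by means of the Bochner--Weitzenb\"ock identity \eqref{BOC} together with the equation $\Delta u = -f(u)$, so that the curvature, Hessian, and boundary quantities in \eqref{GF} emerge, while the $f'(u)$ contributions cancel.

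\smallskip

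\noindent\textbf{Step 1 (regularization).} Since $u\in C^3(\overline\Omega)$ but $|\nabla u|$ fails to be $C^1$ at the critical set $\{\nabla u=0\}$, I would first work with the smooth surrogate $w_\varepsilon:=\sqrt{|\nabla u|^2+\varepsilon}$ and take $w_\varepsilon\,\varphi$ as competitor in \eqref{hstab}. Substituting and expanding
\[
|\nabla(w_\varepsilon\varphi)|^2
= w_\varepsilon^2\,|\nabla\varphi|^2 + \varphi^2\,|\nabla w_\varepsilon|^2
+ \tfrac12\,\bigl\langle \nabla w_\varepsilon^2,\nabla\varphi^2\bigr\rangle,
\]
and noting that $\nabla w_\varepsilon^2=\nabla|\nabla u|^2$ together with $|\nabla w_\varepsilon|^2\to |\nabla|\nabla u||^2$ a.e. on $\{\nabla u\neq 0\}$ (and vanishes on $\{\nabla u=0\}$, which is also where $\nabla|\nabla u|$ vanishes a.e.~by Stampacchia), I would pass to the limit $\varepsilon\to 0$ by dominated convergence at the very end. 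Since this is routine I will write the subsequent steps formally with $w=|\nabla u|$.

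\smallskip

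\noindent\textbf{Step 2 (integrating by parts the cross term).} The central manipulation is to recognize
\[
\tfrac12\int_\Omega \bigl\langle \nabla |\nabla u|^2,\nabla \varphi^2\bigr\rangle\,dV
= -\tfrac12\int_\Omega \varphi^2\,\Delta|\nabla u|^2\,dV
+ \tfrac12\int_{\partial\Omega}\varphi^2\,\bigl\langle \nabla|\nabla u|^2,\nu\bigr\rangle\,d\sigma.
\]
This is exactly where the boundary term in \eqref{GF} comes from, and is the natural place to account for the Robin datum: the $\tfrac12\langle\nabla|\nabla u|^2,\nu\rangle$ surface term added to the boundary contribution $h'(u)|\nabla u|^2\varphi^2$ from \eqref{hstab} produces the boundary integrand claimed in \eqref{GF}.

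\smallskip

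\noindent\textbf{Step 3 (applying Bochner and the equation).} I would then substitute \eqref{BOC} into the $\Delta|\nabla u|^2$ term, namely
\[
\tfrac12\Delta|\nabla u|^2 = |H_u|^2 + \langle \nabla\Delta u,\nabla u\rangle + \Ric(\nabla u,\nabla u),
\]
and use the equation $\Delta u=-f(u)$, which yields $\langle\nabla\Delta u,\nabla u\rangle = -f'(u)|\nabla u|^2$. The key observation is that the resulting bulk term $-\int_\Omega f'(u)|\nabla u|^2\varphi^2\,dV$ exactly cancels the $-\int_\Omega f'(u)\varphi^2\,dV$ contribution from the stability inequality applied to $|\nabla u|\varphi$. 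Collecting the leftover pieces — the bulk terms $|\nabla|\nabla u||^2\varphi^2$, $-|H_u|^2\varphi^2$, $-\Ric(\nabla u,\nabla u)\varphi^2$, and $|\nabla u|^2|\nabla\varphi|^2$, together with the two boundary contributions — and rearranging gives exactly \eqref{GF}.

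\smallskip

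\noindent\textbf{Expected difficulty.} The algebraic bookkeeping is unambiguous once the roles of the ingredients are fixed; the only genuine subtlety is the regularization in Step 1, which must be handled so that the bulk term $|\nabla w_\varepsilon|^2\varphi^2$ passes to the liminf of $|\nabla|\nabla u||^2\varphi^2$ without losing information (Kato-type inequality, or Stampacchia's theorem on level sets of $W^{2,2}$ functions). Everything else is a direct combination of \eqref{hstab}, \eqref{BOC}, the equation in \eqref{eqncomp}, and the divergence theorem.
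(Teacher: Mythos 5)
Your proposal is correct and follows essentially the same route as the paper: test the stability inequality \eqref{hstab} with $|\nabla u|\varphi$, integrate the cross term $\tfrac12\langle\nabla|\nabla u|^2,\nabla\varphi^2\rangle$ by parts to produce the boundary contribution $\tfrac12\langle\nabla|\nabla u|^2,\nu\rangle\varphi^2$, and then use the Bochner--Weitzenb\"ock formula \eqref{BOC} together with $\nabla\Delta u=-f'(u)\nabla u$ so that the $f'(u)|\nabla u|^2\varphi^2$ terms cancel. The only difference is your regularization $w_\varepsilon=\sqrt{|\nabla u|^2+\varepsilon}$ in Step 1, which the paper omits by plugging $|\nabla u|\varphi$ in directly; this is an added layer of care at the critical set rather than a different argument.
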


We notice that formula~\eqref{GF}
relates
the stability condition of the solution with the
principal curvatures and the tangential gradient
of the corresponding
level set. Since this formula
bounds a weighted~$L^2$-norm of
any~$\varphi\in C^1(\Omega)$ plus a
boundary term by a weighted~$L^2$-norm of its gradient,
we may consider this
formula as a weighted Poincar\'e type inequality.
\medskip

The idea of
using weighted Poincar\'e inequalities to deduce quantitative
and qualitative information on the solutions of a partial differential
equation has been originally introduced by Sternberg and
Zumbrun in~\cite{SZ1, SZ2}
in the context of the Allen-Cahn equation, and it has
been extensively exploited to prove symmetry
and rigidity results, see e.g.~\cite{FarHab, FSV, ASV2}.
See also~\cite{FMV, fsv1, fsv2, FP, FV1, PV}
for applications to Riemannian and sub-Riemannian manifolds,
\cite{CNV} for problems involving the Ornstein-Uhlenbeck operator,
\cite{CNP,FNP}
for semilinear equations with unbounded drift and~\cite{fazly, DI, DP1,DP2}
for systems of equations. \medskip

Recently, in~\cite{DSV, DPV},
the cases of Neumann conditions for boundary
reaction-diffusion equations
and of Robin conditions for linear and quasilinear equations
have been studied,
using a Poincar\'e inequality that involves also suitable boundary terms. 
\medskip

We point out that Theorem~\ref{main1} comprises the classical case
of the Laplacian in the Euclidean space with homogeneous
Neumann data, which was studied in
the celebrated papers~\cite{CH, Mat}. In this spirit,
our Theorem~\ref{main1} can be seen as a nonlinear
version of the results of~\cite{CH, Mat} on Riemannian manifolds
(and, with respect to~\cite{CH, Mat},
we perform a technically different proof, based on
Theorem~\ref{eqnlin}).
\medskip

The next two sections are devoted to the proofs of
Theorems~\ref{eqnlin} and~\ref{main1} respectively.

\section{Proof of Theorem~\ref{eqnlin}}

Applying~\eqref{hstab} with~$\varphi$
replaced by~$|\nabla_g u| \varphi$, we get
\begin{eqnarray*}
&& \int_\Omega f'(u)|\nabla u|^2 \phi^2\,dV\\
&\le& 
\int_\Omega \Big(\big|\nabla |\nabla u|\big|^2 \phi^2
+|\nabla u|^2|\nabla \phi|^2 +2\phi|\nabla u|
\langle\nabla \phi,\nabla|\nabla u|\rangle\Big)\,dV\\
&&\qquad +\int_{\partial\Omega} h'(u) |\nabla u|^2 \varphi^2 \,d\sigma\\
&=&
\int_\Omega\Big( \big|\nabla |\nabla u|\big|^2 \phi^2
+|\nabla u|^2|\nabla \phi|^2 +\frac12
\langle\nabla \phi^2,\nabla |\nabla u|^2\rangle\Big)\,dV+\int_{\partial\Omega} h'(u) |\nabla u|^2 \varphi^2 d\sigma
.\end{eqnarray*}
Therefore, integrating by parts the third term in the last line, we get
\begin{eqnarray*}
&&\int_\Omega f'(u)|\nabla u|^2 \phi^2\,dV\\
&\le&
\int_\Omega \Big(\big|\nabla |\nabla u|\big|^2 \phi^2
+|\nabla u|^2|\nabla \phi|^2 -\frac12
\phi^2\Delta |\nabla u|^2\Big)\,dV\\
&&\qquad +\frac{1}{2}\int_{\partial\Omega} \varphi^2 \left\langle
\nabla|\nabla u|^2, \nu\right\rangle\, d\sigma
+\int_{\partial\Omega} h'(u) |\nabla u|^2 \varphi^2 \,d\sigma.
\end{eqnarray*}
Hence, recalling~\eqref{BOC},
\begin{equation}\begin{split}\label{ieegherhrhg}
&\int_\Omega f'(u)|\nabla u|^2 \phi^2\,dV\\
\le\;&
\int_\Omega \Big[\big|\nabla |\nabla u|\big|^2 \phi^2
+|\nabla u|^2|\nabla \phi|^2
-\Big(
|H_u|^2+ \langle \nabla \Delta u,\nabla u\rangle
+\Ric (\nabla u,\nabla u)
\Big)\varphi^2\Big]
\,dV\\
&\qquad+\frac{1}{2}\int_{\partial\Omega} \varphi^2 \left\langle \nabla|\nabla u|^2,
\nu\right\rangle \, d\sigma
+\int_{\partial\Omega} h'(u) |\nabla u|^2 \varphi^2\, d\sigma
.\end{split}\end{equation}
Now, by differentiating the equation in~\eqref{eqncomp}, we see that
$$ -\nabla \Delta u=f'(u)\nabla u.$$
Plugging this information into~\eqref{ieegherhrhg}, we conclude that
\begin{eqnarray*}
0&\le&
\int_\Omega \Big[ \big|\nabla |\nabla u|\big|^2 \phi^2
+|\nabla u|^2|\nabla \phi|^2 -
\Big(
|H_u|^2+\Ric (\nabla u,\nabla u)
\Big)\phi^2\Big]\,dV+\\
&&\qquad 
+\frac{1}{2}\int_{\partial\Omega} \varphi^2 \left\langle \nabla|\nabla u|^2, \nu\right\rangle d\sigma+\int_{\partial\Omega} h'(u) |\nabla u|^2 \varphi^2 d\sigma
,\end{eqnarray*}
which completes the proof of Theorem~\ref{eqnlin}.\qed

\section{Proof of Theorem~\ref{main1}}

In this section we provide the proof of Theorem~\ref{main1}. 
We first state 
the following result, that proves Theorem~3.4
of~\cite{punzo} in the more general case in which~$h$ is any~$C^1$
function.

\begin{teo}\label{Pz}
Let~$w\in C^3(\overline{\Omega})$ satisfy
\begin{equation}\label{bejgbegje}
\partial_{\nu} w+h(w)=0\quad {\mbox{ on }}
\partial \Omega,
\end{equation}
for some~$h\in C^1(\R)$. Then
\[
\frac{1}{2}\frac{\partial}{\partial\nu} |\nabla w|^2
= \II(\tilde{\nabla} w,\tilde{\nabla} w)
-h'(w)|\tilde{\nabla} w|^2-h(w)\, H_w(\nu,\nu)
\quad {\mbox{ on }} \partial\Omega,
\]
where~$\tilde{\nabla} w:=\nabla w - g(\nabla w,\nu)\nu$
is the tangential gradient with respect to~$\partial\Omega$,
and~$H_w$ is the Hessian matrix of the function~$w$.
\end{teo}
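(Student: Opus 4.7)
The plan is to compute the normal derivative of $|\nabla w|^2$ by splitting $\nabla w$ into its tangential and normal components on $\partial\Omega$, and then using the boundary condition together with the definition of the second fundamental form. Throughout I work pointwise at $p\in\partial\Omega$ and freely use that the Hessian $H_w$ is a symmetric $2$-tensor satisfying $H_w(X,Y)=\langle\nabla_X\nabla w,Y\rangle$.

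First, by symmetry of $H_w$ and by the metric compatibility of the Levi-Civita connection,
$$\frac{1}{2}\frac{\partial}{\partial\nu}|\nabla w|^2=\langle\nabla_\nu\nabla w,\nabla w\rangle=H_w(\nu,\nabla w).$$
Next, the boundary condition~\eqref{bejgbegje} gives $g(\nabla w,\nu)=-h(w)$ on $\partial\Omega$, so
$$\nabla w=\tilde\nabla w-h(w)\,\nu.$$
Substituting this decomposition into the previous identity and using bilinearity of $H_w$ yields
$$\frac{1}{2}\frac{\partial}{\partial\nu}|\nabla w|^2=H_w(\nu,\tilde\nabla w)-h(w)\,H_w(\nu,\nu),$$
which already accounts for the third term on the right-hand side of the claim.

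The core of the proof is thus to rewrite the mixed term $H_w(\nu,\tilde\nabla w)$ in terms of $\II$ and $h'(w)$. To achieve this, I would differentiate the boundary identity $g(\nabla w,\nu)+h(w)=0$ along the tangential direction $X:=\tilde\nabla w\in T_p(\partial\Omega)$. On the one hand, the chain rule yields $X\bigl(g(\nabla w,\nu)\bigr)=-h'(w)\,X(w)=-h'(w)\,|\tilde\nabla w|^2$, where I used that $X(w)=\langle\nabla w,X\rangle=\langle\tilde\nabla w,X\rangle$ because $X\perp\nu$. On the other hand, expanding the same derivative via the connection,
$$X\bigl(g(\nabla w,\nu)\bigr)=H_w(X,\nu)+\langle\nabla w,\nabla_X\nu\rangle.$$
Since $|\nu|\equiv 1$ on $\partial\Omega$, the vector $\nabla_X\nu$ is tangent to $\partial\Omega$, so $\langle\nabla w,\nabla_X\nu\rangle=\langle\tilde\nabla w,\nabla_X\nu\rangle$; with the sign convention for the second fundamental form used in the paper (so that, for $X,Y$ tangent, $\langle\nabla_X\nu,Y\rangle=-\II(X,Y)$), this last quantity equals $-\II(\tilde\nabla w,\tilde\nabla w)$. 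Equating the two expressions for $X\bigl(g(\nabla w,\nu)\bigr)$ gives
$$H_w(\nu,\tilde\nabla w)=\II(\tilde\nabla w,\tilde\nabla w)-h'(w)\,|\tilde\nabla w|^2,$$
and combining with the earlier identity completes the proof.

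The main obstacle is simply bookkeeping: fixing the sign conventions for $\nu$ (outward), for $\II$, and for $\tilde\nabla w$ consistently, and checking that the tangential differentiation step applies to $X=\tilde\nabla w$ even though this vector field need not be defined away from $\partial\Omega$ — the computation is purely pointwise, using only the $1$-jet of $w$ along $\partial\Omega$, so this causes no difficulty. No further analytic input beyond $C^3$-regularity of $w$ is required.
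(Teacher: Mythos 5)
Your proof is correct and follows essentially the same route as the paper's: both decompose $\nabla w$ into its tangential part and the normal part $-h(w)\nu$ via the boundary condition, and both rest on the key identity $H_w(\nu,\tilde{\nabla} w)=\II(\tilde{\nabla} w,\tilde{\nabla} w)-h'(w)|\tilde{\nabla} w|^2$ obtained by differentiating the Robin condition tangentially along $\partial\Omega$. The only difference is presentational: the paper works in a Darboux frame and imports the frame version of this identity from the reference of Bandle, Mastrolia, Monticelli and Punzo, whereas you derive it invariantly, which makes your argument slightly more self-contained; your sign convention for $\II$ matches the paper's.
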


\begin{proof} 
We let~$\{e_i\}$, with~$i\in\{1,\dots,m\}$,
be a Darboux frame along~$\partial\Omega$, that is such that~$
e_m:=\nu$. In this setting, conditions~\eqref{bejgbegje} reads
\begin{equation}\label{bejgbegje:1}
w_m=-h(w)\quad {\mbox{ on }}
\partial \Omega.
\end{equation}
Also, for any~$i$,~$j\in\{1,\dots, m-1\}$, we define
$$ H_{ij}:=g\big(\II(e_i,e_j),\nu\big).$$
Then, reasoning as in the proof of formula~(3.32) 
in~\cite{punzo}, we obtain that, for any~$j\in\{1,\dots,m-1\}$,
$$ w_{jm}= \sum_{i=1}^{m-1}H_{ij}w_i -h'(w)w_j\quad
{\mbox{ on }} \partial\Omega.~$$
Therefore, multiplying both terms by~$w_j$, we get
\begin{equation}\label{trruhgrh}
w_{jm}\,w_j=\sum_{i=1}^{m-1} H_{ij}w_i\,w_j -h'(w) w_j^2\quad
{\mbox{ on }} \partial\Omega.
\end{equation}
On the other hand, for any~$i\in\{1,\dots,m\}$,
$$ \frac12 \big( |\nabla w|^2\big)_i= 
\sum_{j=1}^m w_j\,w_{ji}=
\sum_{i=1}^{m-1}w_j\,w_{ji}+ w_m\,w_{mi} =
\sum_{i=1}^{m-1}w_j\,w_{ji}-h(w)w_{mi},
$$
where we used~\eqref{bejgbegje:1} in the last passage.

As a consequence, 
$$ \frac12 \frac{\partial}{\partial\nu} |\nabla w|^2= 
\sum_{j=1}^{m-1}w_j\,w_{jm}-h(w)w_{mm}\quad {\mbox{ on }}
\partial\Omega.
$$
{F}rom this and~\eqref{trruhgrh} we thus obtain 
$$ \frac12 \frac{\partial}{\partial\nu} |\nabla w|^2= 
\sum_{i,j=1}^{m-1}H_{ij}w_i\,w_j -
h'(w)\,\sum_{j=1}^{m-1}w_j^2-h(w)w_{mm}\quad
{\mbox{ on }}
\partial\Omega,
$$
which implies the desired result.
\end{proof}

Now we recall that~$\Delta$ is the Laplace-Beltrami
operator of the manifold~$(M,g)$, and we 
let~$\tilde{\Delta}$ be the Laplace-Beltrami operator of the
manifold~$\partial\Omega$ endowed with the induced metric
by the embedding~$\partial\Omega\hookrightarrow M$.
It holds that
\begin{align}\label{sss}
\Delta w= \tilde{\Delta} w -(m-1)\,H\,\frac{\partial w }{\partial \nu}+ H_w(\nu,\nu).
\end{align}
With this, we can prove the following result:

\begin{lem}
Let~$u\in C^3(\overline{\Omega})$ be a stable solution of~\eqref{eqncomp}. Then
\begin{equation}\begin{split}\label{GF3}
&\int_\Omega \Big(
\Ric (\nabla u,\nabla u)+|H_u|^2-\big|\nabla|\nabla u|\big|^2
\Big)\varphi^2\,dV\\
&\qquad
-\int_{\partial\Omega} \left(\II(\tilde{\nabla} u,\tilde{\nabla} u)-
h'(u) |\tilde{\nabla} u|^2+h(u)\, f(u)+
(m-1)\big(h(u)\big)^2 H
+h'(u)\big(h(u)\big)^2  \right) \varphi^2 \, d\sigma\\
\le\;&
\int_\Omega |\nabla u|^2 |\nabla\varphi|^2
\,dV-\int_{\partial\Omega}h( u)\,\left\langle
\tilde{\nabla} u,\tilde{\nabla}\varphi^2\right\rangle\, d\sigma,
\end{split}\end{equation}
for any~$\varphi\in C^\infty(\Omega)$.
\end{lem}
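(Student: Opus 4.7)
The plan is to take the inequality \eqref{GF} provided by Theorem~\ref{eqnlin} and massage its boundary integrand into the form displayed in \eqref{GF3}, using the three auxiliary identities already at our disposal: Theorem~\ref{Pz}, the decomposition of the Laplace--Beltrami operator \eqref{sss}, and the tangential divergence theorem on the closed hypersurface $\partial\Omega$.

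First, I would apply Theorem~\ref{Pz} with $w=u$ to rewrite
$$
\tfrac{1}{2}\langle\nabla|\nabla u|^2,\nu\rangle
=\II(\tilde{\nabla}u,\tilde{\nabla}u)-h'(u)|\tilde{\nabla}u|^2-h(u)\,H_u(\nu,\nu)
\qquad\text{on }\partial\Omega.
$$
At the same time, the Robin condition $\partial_\nu u=-h(u)$ yields the orthogonal decomposition $|\nabla u|^2=|\tilde{\nabla}u|^2+(h(u))^2$, so that
$$
h'(u)|\nabla u|^2=h'(u)|\tilde{\nabla}u|^2+h'(u)(h(u))^2.
$$
Substituting both expressions into the boundary integrand of \eqref{GF}, the term $h'(u)|\tilde{\nabla}u|^2$ cancels, and what is left involves $\II(\tilde{\nabla}u,\tilde{\nabla}u)$, $h(u)H_u(\nu,\nu)$ and $h'(u)(h(u))^2$.

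Next, I would eliminate $H_u(\nu,\nu)$ via \eqref{sss}: since $\Delta u=-f(u)$ in $\Omega$ and $\partial_\nu u=-h(u)$ on $\partial\Omega$, formula \eqref{sss} applied to $w=u$ gives
$$
H_u(\nu,\nu)=-f(u)-(m-1)\,H\,h(u)-\tilde{\Delta}u\qquad\text{on }\partial\Omega.
$$
Multiplying by $h(u)$ and inserting into the boundary integrand produces the terms $h(u)f(u)$ and $(m-1)(h(u))^2 H$ demanded by \eqref{GF3}, together with an extra contribution $h(u)\,\tilde{\Delta}u$ that does not yet appear on the right-hand side.

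The last step is to remove this $h(u)\,\tilde{\Delta}u$ term. Since $\partial\Omega$ is a closed (boundary-less) compact hypersurface, integrating by parts tangentially gives
$$
\int_{\partial\Omega}h(u)\,\tilde{\Delta}u\,\varphi^2\,d\sigma
=-\int_{\partial\Omega}h'(u)|\tilde{\nabla}u|^2\varphi^2\,d\sigma
-\int_{\partial\Omega}h(u)\,\langle\tilde{\nabla}u,\tilde{\nabla}\varphi^2\rangle\,d\sigma,
$$
which is exactly what is needed to generate the $-h'(u)|\tilde{\nabla}u|^2$ boundary contribution on the left of \eqref{GF3} and the tangential gradient term on its right. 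Collecting all pieces and rearranging yields \eqref{GF3}.

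I expect the main obstacle to be purely bookkeeping: keeping track of the signs produced by the outward normal convention in Theorem~\ref{Pz}, by the substitution $\partial_\nu u=-h(u)$ (which introduces minus signs in $H_u(\nu,\nu)$ and in $|\nabla u|^2$), and by the tangential integration by parts. No new analytic ingredient is required; the argument is a careful algebraic reduction of \eqref{GF} using Theorem~\ref{Pz}, \eqref{sss} and the divergence theorem on the closed manifold $\partial\Omega$.
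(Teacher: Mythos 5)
Your proposal is correct and follows essentially the same route as the paper: combine Theorem~\ref{eqnlin} with Theorem~\ref{Pz}, eliminate $H_u(\nu,\nu)$ via \eqref{sss} together with the equation and the Robin condition, use $|\nabla u|^2=|\tilde{\nabla}u|^2+(h(u))^2$ on $\partial\Omega$, and remove the residual $h(u)\,\tilde{\Delta}u$ term by tangential integration by parts on the closed hypersurface $\partial\Omega$. The only (immaterial) difference is the order in which the substitutions are performed; the sign bookkeeping you flag works out exactly as you describe.
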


\begin{proof}
{F}rom Theorems~\ref{eqnlin} and~\ref{Pz}, 
for every stable weak solution~$u$ to~\eqref{eqncomp}
and for any~$\varphi\in C^\infty(\Omega)$, we have that
\begin{equation}\begin{split}\label{jgjbr9tuy}
&\int_\Omega \Big(
\Ric (\nabla u,\nabla u)+|H_u|^2-\big|\nabla|\nabla u|\big|^2
\Big)\varphi^2\,dV\\
&\qquad - \int_{\partial\Omega}\Big(
\II(\tilde{\nabla} u,\tilde{\nabla} u)-h'(u) |\tilde{\nabla} u|^2
-h(u)\,
H_u(\nu,\nu)+h'(u)|\nabla u|^2\Big)\varphi^2\,d\sigma\\
\le\;& 
\int_\Omega |\nabla u|^2 |\nabla\varphi|^2\,dV.
\end{split}\end{equation}
Now we use~\eqref{sss} to manipulate the integral on the boundary
of~$\Omega$: in this way, we obtain from~\eqref{jgjbr9tuy} that
\begin{equation*}\begin{split}
&\int_\Omega \Big(
\Ric (\nabla u,\nabla u)+|H_u|^2-\big|\nabla|\nabla u|\big|^2
\Big)\varphi^2\,dV\\
&\qquad- \int_{\partial\Omega}\left[
\II(\tilde{\nabla} u,\tilde{\nabla} u)-h'(u) |\tilde{\nabla} u|^2
-h(u)
\left(\Delta u -\tilde{\Delta} u +(m-1)\,H\,\frac{\partial u }{\partial \nu}\right)
+h'(u)|\nabla u|^2\right]\varphi^2\,d\sigma\\
\le\;&
\int_\Omega |\nabla u|^2 |\nabla\varphi|^2\,dV.\end{split}\end{equation*}
Thus, recalling\footnote{Notice that,
since~$u$ is regular enough,
the equation holds true up to the boundary of~$\Omega$.}
\eqref{eqncomp}, we conclude that
\begin{equation}\begin{split}\label{GF2}
&\int_\Omega \Big(
\Ric (\nabla u,\nabla u)+|H_u|^2-\big|\nabla|\nabla u|\big|^2
\Big)\varphi^2\,dV\\
&\qquad- \int_{\partial\Omega}\Big(
\II(\tilde{\nabla} u,\tilde{\nabla} u)-
h'(u) |\tilde{\nabla} u|^2+h(u)\,
f( u)+h(u)\,\tilde{\Delta} u +
(m-1)\,\big(h(u)\big)^2\,H
+h'(u)|\nabla u|^2\Big)\varphi^2\,d\sigma\\
\le\;&
\int_\Omega |\nabla u|^2 |\nabla\varphi|^2\,dV.\end{split}\end{equation}
Now we observe that 
\[
|\nabla u|^2=|\tilde{\nabla} u|^2+\left|\frac{\partial u}{\partial\nu}\right|^2
=|\tilde{\nabla} u|^2+\big(h(u) \big)^2\quad {\mbox{ on }} \partial\Omega.
\]
Plugging this information into~\eqref{GF2}, we obtain that
\begin{equation*}\begin{split}
&\int_\Omega \Big(
\Ric (\nabla u,\nabla u)+|H_u|^2-\big|\nabla|\nabla u|\big|^2
\Big)\varphi^2\,dV\\
&\qquad - \int_{\partial\Omega}\Big(
\II(\tilde{\nabla} u,\tilde{\nabla} u)
+h(u)\,
f( u)+h(u)\,\tilde{\Delta} u +
(m-1)\,\big(h(u)\big)^2\,H +h'(u)\big(h(u)\big)^2 \Big)\varphi^2\,d\sigma\\
\le\;&
\int_\Omega |\nabla u|^2 |\nabla\varphi|^2\,dV.\end{split}\end{equation*}
Now we notice that
\[
\int_{\partial\Omega} h(u)\,\varphi^2\,\tilde{\Delta} u \, d\sigma=
-\int_{\partial\Omega}h'(u) |\tilde{\nabla}u|^2\varphi^2\, d\sigma
-\int_{\partial\Omega} h(u)\left\langle \tilde{\nabla} u,\tilde{\nabla}\varphi^2\right\rangle\, d\sigma,
\]
and therefore
\begin{eqnarray*}
&&\int_\Omega \Big(
\Ric (\nabla u,\nabla u)+|H_u|^2-\big|\nabla|\nabla u|\big|^2
\Big)\varphi^2\,dV\\
&&\qquad -\int_{\partial\Omega}
\left(\II(\tilde{\nabla} u,\tilde{\nabla} u)-h'(u) |\tilde{\nabla} u|^2
+h(u) \,f(u)+
(m-1)\,\big(h(u)\big)^2 H+
h'(u) \big(h(u)\big)^2  \right) \varphi^2 \,d\sigma \\
&\le&
\int_\Omega |\nabla u|^2 |\nabla\varphi|^2
\,dV-\int_{\partial\Omega} h(u)\,\left\langle
\tilde{\nabla} u,\tilde{\nabla}\varphi^2\right\rangle\, d\sigma,
\end{eqnarray*}
which proves the desired inequality.
\end{proof}

Before completing the proof of
Theorem~\ref{main1} we recall the following lemmata
proved in~\cite[Lemma 5]{fsv1} and~\cite[Lemma 9]{fsv1}, respectively.

\begin{lem}\label{stimas}
For any smooth function~$\phi:M\rightarrow\R$, we have that
\begin{equation}\label{POS}
|H_\phi|^2\ge\big|\nabla|\nabla \phi|\big|^2\qquad
{\mbox{ almost everywhere.}}
\end{equation}
\end{lem}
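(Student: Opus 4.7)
The plan is to prove the pointwise (almost everywhere) inequality $|H_\phi|^2 \ge |\nabla|\nabla\phi||^2$ by separating the critical set $E:=\{\nabla\phi=0\}$ from its complement. Away from $E$ the inequality becomes a purely algebraic comparison: the first row of the Hessian matrix has smaller Euclidean norm than the full matrix. On $E$ both sides vanish almost everywhere, by a Stampacchia-type argument.

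First, I would fix a point $p$ with $\nabla\phi(p)\ne 0$ and pass to normal coordinates at $p$, so that $g_{ij}(p)=\delta_{ij}$ and $\Gamma^k_{ij}(p)=0$. Then, writing $\phi_i:=\partial_i\phi$ and $\phi_{ij}:=\partial^2_{ij}\phi$, one has $(H_\phi)_{ij}(p)=\phi_{ij}(p)$ and $|H_\phi|^2(p)=\sum_{i,j}\phi_{ij}(p)^2$. After a further orthogonal rotation of the coordinate basis of $T_pM$, I may assume $\nabla\phi(p)=|\nabla\phi|(p)\,e_1|_p$, i.e.\ $\phi_1(p)=|\nabla\phi|(p)$ and $\phi_j(p)=0$ for $j\ge 2$. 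Differentiating the identity $|\nabla\phi|^2=g^{ij}\phi_i\phi_j$ and evaluating at $p$ (using that the first derivatives of $g^{ij}$ vanish there, since $\Gamma^k_{ij}(p)=0$) gives
\[
\partial_i|\nabla\phi|(p)\;=\;\frac{1}{|\nabla\phi|(p)}\sum_{j=1}^m\phi_j(p)\,\phi_{ji}(p)\;=\;\phi_{1i}(p),
\]
so that
\[
\big|\nabla|\nabla\phi|\big|^2(p)\;=\;\sum_{i=1}^m\phi_{1i}(p)^2\;\le\;\sum_{i,j=1}^m\phi_{ij}(p)^2\;=\;|H_\phi|^2(p).
\]

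To handle the critical set $E$, I would invoke the classical fact that if $u$ is locally Lipschitz on $M$ then $\nabla u=0$ almost everywhere on the level set $\{u=0\}$. Applying this to the locally Lipschitz function $u=|\nabla\phi|^2$ (or directly to $|\nabla\phi|$) yields $\nabla|\nabla\phi|=0$ almost everywhere on $E$, so that both sides of \eqref{POS} vanish there. The only (mild) obstacle in the whole argument is precisely this measure-theoretic subtlety at critical points, where $|\nabla\phi|$ fails to be differentiable in the classical sense; away from such points the proof reduces to the one-line observation that the squared norm of one row of a symmetric matrix is bounded by the squared Frobenius norm of the whole matrix.
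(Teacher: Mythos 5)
Your argument is correct and is essentially the standard Sternberg--Zumbrun proof: the paper itself does not reprove this lemma but cites it from~\cite{fsv1} (Lemma~5 there), whose proof is the same combination of a pointwise Cauchy--Schwarz-type estimate where $\nabla\phi\neq0$ and a Stampacchia argument on the critical set. One minor remark: the Stampacchia step must be applied to the Lipschitz function $|\nabla\phi|$ itself rather than to $|\nabla\phi|^2$, since the chain rule degenerates on $\{\nabla\phi=0\}$ --- as your parenthetical already acknowledges.
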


\begin{lem} \label{Fi1}
Suppose that the Ricci curvature of~$M$
is nonnegative and that~$\Ric$
does not vanish identically.

Let~$u$ be a solution
of~\eqref{eqncomp}, with
\begin{equation*}
\Ric (\nabla u,\nabla u) (p)=0\qquad {\mbox{ for any }}
p\in M.\end{equation*}
Then,~$u$ is constant.
\end{lem}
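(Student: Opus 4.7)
The plan is to combine the Bochner--Weitzenb\"ock identity \eqref{BOC} with the pointwise algebraic constraint $\Ric(\nabla u,\nabla u)\equiv 0$ to deduce the vanishing of $\nabla u$. First, differentiating the bulk equation in~\eqref{eqncomp}, one has $\nabla\Delta u=-f'(u)\nabla u$. Substituting this, together with the standing hypothesis $\Ric(\nabla u,\nabla u)\equiv 0$, into the Bochner--Weitzenb\"ock identity \eqref{BOC} yields, on the domain of $u$,
\[
\tfrac{1}{2}\Delta|\nabla u|^{2}\;=\;|H_u|^{2}\,-\,f'(u)\,|\nabla u|^{2}.
\]
Invoking Lemma~\ref{stimas}, namely $|H_u|^{2}\ge|\nabla|\nabla u||^{2}$, and expanding $\Delta|\nabla u|$ in terms of $\Delta|\nabla u|^{2}$ and $|\nabla|\nabla u||^{2}$ on the open set $\{|\nabla u|>0\}$, one arrives at a Kato-type differential inequality of the form $\Delta|\nabla u|\ge -f'(u)\,|\nabla u|$, valid wherever $\nabla u$ does not vanish.

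Second, the hypotheses on the ambient Ricci curvature play a purely algebraic role. Because $\Ric\ge 0$ on $M$ and $\Ric\not\equiv 0$, there exist a point $p_0\in M$ and a direction $X_0\in T_{p_0}M$ with $\Ric(p_0)(X_0,X_0)>0$; by continuity of $\Ric$, this positivity persists in an open neighborhood of $p_0$, so that on this neighborhood the kernel $\ker\Ric$ is a proper subspace of the tangent space. The hypothesis $\Ric(\nabla u,\nabla u)\equiv 0$, combined with $\Ric\ge 0$, forces $\nabla u(p)\in\ker\Ric(p)$ pointwise, and so, at every $p$ lying in the intersection of that neighborhood with the domain of $u$, the gradient $\nabla u(p)$ is constrained to a proper subspace of $T_pM$.

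Third, one combines the Bochner identity of the first step with the rank-deficient algebraic constraint of the second step to show that $\nabla u$ must in fact vanish identically near $p_0$. From there, unique continuation for the elliptic PDE $\Delta u+f(u)=0$ (equivalently, a strong maximum principle applied to the Kato inequality for $v=|\nabla u|$), together with the connectedness of $\Omega$, propagates $\nabla u\equiv 0$ to all of $\Omega$, and $u$ is constant.

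The hard part will be the third step: turning the pointwise algebraic restriction ``$\nabla u\in\ker\Ric$'' near $p_0$ into the global statement $\nabla u\equiv 0$ on $\Omega$. The delicacy is that the distribution $\ker\Ric$ need not have constant rank and need not intersect $\Omega$ in a particularly nice way, and one must exclude that $\nabla u$ is a nontrivial smooth section of such a rank-varying kernel distribution. The resolution of this point is precisely the content of \cite[Lemma~9]{fsv1}, whose proof exploits simultaneously the elliptic regularity of $u$, the smoothness of $\Ric$, and the Bochner inequality above; the cleanest presentation is thus to import that result directly, having checked that its hypotheses are met here.
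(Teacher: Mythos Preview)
The paper gives no proof of this lemma at all: it is simply recalled verbatim from \cite[Lemma~9]{fsv1}. Your proposal, after the preliminary scaffolding, also defers the substantive step to \cite[Lemma~9]{fsv1}, so in the end you and the paper take the same route --- namely, cite the external source.

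Two remarks on the scaffolding you add. First, your Step~2 locates a point $p_0\in M$ at which $\Ric$ is positive in some direction, but the hypothesis only says $\Ric\not\equiv 0$ on $M$, while $u$ lives on the compact subdomain $\Omega\subset M$; nothing in your outline guarantees that $p_0$ (or any nearby point) lies in $\Omega$, so the intersection you form may be empty and the argument stalls. Second, your Step~3 is, by your own admission, not an argument but a pointer to \cite[Lemma~9]{fsv1}; the Bochner/Kato inequality of Step~1 and the kernel constraint of Step~2 do not by themselves force $\nabla u\equiv 0$ near $p_0$ without further work, and that work is exactly what you are importing. So the sketch is heuristically suggestive but not self-contained --- which is fine here, since the paper itself does no more.
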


With this, we are able to finish the proof of Theorem~\ref{main1}:

\begin{proof}[Proof of Theorem~\ref{main1}:]
Taking~$\varphi\equiv 1$ in~\eqref{GF3} we see that
\begin{equation*}\begin{split}
&\int_\Omega \Big(
\Ric (\nabla u,\nabla u)+|H_u|^2-\big|\nabla|\nabla u|\big|^2
\Big)\,dV\\
&\qquad\le
\int_{\partial\Omega} \left(\II(\tilde{\nabla} u,\tilde{\nabla} u)
-h'(u) |\tilde{\nabla} u|^2+h(u)\, f(u)+ (m-1)\big(h(u)\big)^2\,
H+h'(u)\big(h(u)\big)^2  \right) \, d\sigma.
\end{split}\end{equation*}
Hence, using~\eqref{cond0} and~\eqref{cond},
we obtain that
\begin{equation}\label{jfghur046} \int_\Omega \Big(
\Ric (\nabla u,\nabla u)+|H_u|^2-\big|\nabla|\nabla u|\big|^2
\Big)\,dV\le 0.\end{equation}
On the other hand, by Lemma~\ref{stimas},
$$ |H_u|^2-\big|\nabla|\nabla u|\big|^2\ge 0 \quad
{\mbox{ on }}\Omega,$$
and so~\eqref{jfghur046} gives
$$ \int_\Omega
\Ric (\nabla u,\nabla u)\,dV\le 0,
$$
which implies that
$$ \Ric (\nabla u,\nabla u)=0 \quad
{\mbox{ in }}\Omega.$$
Thus, the conclusion follows from Lemma~\ref{Fi1}.
\end{proof}

\end{document}